\DeclareMathOperator{\tr}{tr}
\DeclareMathOperator{\Tr}{Tr}
\DeclareMathOperator{\opm}{M}
\DeclareMathOperator{\ops}{S}
\DeclareMathOperator{\opd}{D}
\newcommand{\ve}{\varepsilon}
\newcommand{\N}{\mathbb{N}}
\newcommand{\R}{\mathbb{R}}
\newcommand{\cK}{\mathcal{K}}
\newcommand{\cO}{\mathcal{O}}
\newcommand{\addots}{\reflectbox{$\ddots$}}
\newcommand{\han}{\mathfrak{H}}
\newcommand{\sT}{{\bf T}}
\newcommand{\fT}{\mathbb{T}}
\newcommand{\ti}{{\rm t}}
\newcommand*{\sym}[1]{\ops_{#1}(\R)}
\newcommand*{\diag}[1]{\opd_{#1}(\R)}
\newcommand{\ort}{\operatorname{O}_n(\R)}
\newtheorem*{rep@thm}{\rep@title}
\newcommand{\newreptheorem}[2]{%
\newenvironment{rep#1}[1]{%
 \def\rep@title{#2 \ref{##1}}%
 \begin{rep@thm}}%
 {\end{rep@thm}}}
\newtheorem{thm}{Theorem}[section]
\newtheorem{lem}[thm]{Lemma}
\newtheorem{cor}[thm]{Corollary}
\newtheorem{prop}[thm]{Proposition}
\theoremstyle{definition}
\newtheorem{exa}[thm]{Example}
\theoremstyle{remark}
\newtheorem{rem}[thm]{Remark}
\numberwithin{equation}{section}
\title[Positive univariate trace polynomials]{
	Positive univariate trace polynomials}
\author[I. Klep]{Igor Klep${}^1$}
\address{Igor Klep, Faculty of Mathematics and Physics, University of Ljubljana}
\email{igor.klep@fmf.uni-lj.si}
\thanks{${}^1$Supported by the Slovenian Research Agency grants J1-2453 and P1-0222.}
\author[J. Pascoe]{James Eldred Pascoe${}^2$}
\address{James Eldred Pascoe, Department of Mathematics, University of Florida}
\email{pascoej@ufl.edu}
\thanks{${}^2$Supported by the NSF grants DMS-1953963 and DMS-1606260.}
\author[J. Vol\v{c}i\v{c}]{Jurij Vol\v{c}i\v{c}${}^3$}
\address{Jurij Vol\v{c}i\v{c}, Department of Mathematics, Texas A\&M University}
\email{volcic@math.tamu.edu}
\thanks{${}^3$Supported by the NSF grant DMS-1954709.}
\subjclass[2010]{Primary 16R30, 13J30; 
	Secondary 14P10, 05E05, 47C15}
\date{\today}
\keywords{Trace polynomial, Positivstellensatz, power mean inequality, Hankel matrix, real algebraic geometry}
\begin{document}
	
\begin{abstract}
A univariate trace polynomial is a polynomial in a variable $x$ and formal trace symbols $\Tr(x^j)$. Such an expression can be naturally evaluated on matrices, where the trace symbols are evaluated as normalized traces. This paper addresses global and constrained positivity of univariate trace polynomials on symmetric matrices of all finite sizes. A tracial analog of Artin's solution to Hilbert's 17th problem is given: a positive semidefinite univariate trace polynomial is a quotient of sums of products of squares and traces of squares of trace polynomials.
\end{abstract}

\maketitle


\section{Introduction}

Univariate trace polynomials are real polynomials in $x$ and $\Tr(x^j)$ for $j\in\N$. We view $x$ as a matrix variable of unspecified size, and evaluate a (univariate) trace polynomial $f(x,\Tr(x),\Tr(x^2),\dots)$ at any $n\times n$ matrix $X$ as $f(X,\frac1n\tr(X),\frac1n\tr(X^2),\dots)$. That is, the trace symbol $\Tr$ evaluates as the normalized trace of a matrix. Trace polynomials as matricial functions originated in invariant theory \cite{Pro}, and more recently emerged in free probability \cite{GS} and quantum information theory \cite{PNA,FN}. In this paper we characterize trace polynomials that have positive semidefinite values at symmetric matrices of all sizes.

Trace polynomials form a commutative polynomial ring (in countably many variables), and several sum-of-squares positivity certificates (Positivstellens\"atze) for multivariate polynomials on semialgebraic sets are provided by real algebraic geometry \cite{BCR,Las,Mar,Lau,BPT}. However, this theory does not appear to directly apply to our setup. First, matrix evaluations of trace polynomials are just a special class of homomorphisms from trace polynomials. Second, the dimension-free context addresses positivity on symmetric matrices of all sizes, hence on a countable disjoint union of real affine spaces;
there is no bound (with respect to the degree of a trace polynomial) on the size of matrices for which positivity needs to be verified (Remark \ref{r:bd}).

Therefore a different approach is required. 
To demonstrate it, consider the inequality
\begin{equation}\label{e:ex}
\Tr(X^4)(\Tr(X^2)-\Tr(X)^2)+2\Tr(X^3)\Tr(X^2)\Tr(X)-\Tr(X^3)^2-\Tr(X^2)^3\ge0
\end{equation}
which holds for all symmetric matrices $X$. One way to certify \eqref{e:ex} is by noticing that 
$f=\Tr(x^4)(\Tr(x^2)-\Tr(x)^2)+2\Tr(x^3)\Tr(x^2)\Tr(x)-\Tr(x^3)^2-\Tr(x^2)^3$ is the determinant of the Hankel matrix
$$\begin{pmatrix}
1 & \Tr(x) & \Tr(x^2) \\
\Tr(x) & \Tr(x^2) & \Tr(x^3) \\
\Tr(x^2) & \Tr(x^3) & \Tr(x^4)
\end{pmatrix}$$
which is positive semidefinite for every matrix evaluation (since it is obtained by applying the normalized partial trace to a positive semidefinite matrix). Another certificate of \eqref{e:ex}, in the spirit of sum-of-squares representations in real algebraic geometry, is
\begin{align*}
&\ \Tr\Big(\big(x-\Tr(x)\big)^2\Big) \cdot f \\
= &\ \Tr\Big(\big(
(\Tr(x)^2-\Tr(x^2)) x^2+(\Tr(x^3)-\Tr(x^2)\Tr(x)) x+\Tr(x^2)^2-\Tr(x^3)\Tr(x)
\big)^2\Big),
\end{align*}
where we view $\Tr$ as an idempotent linear endomorphism of trace polynomials in a natural way.  
Thus $f$ is a quotient of traces of squares. The main result of this paper shows that these characterizations apply to all positive trace polynomials.

\begin{repcor}{c:artin}
Let $f$ be a univariate trace polynomial. Then $f(X)$ is positive semidefinite for all symmetric matrices $X$ if and only if $f$ is a quotient of sums of products of squares and traces of squares of trace polynomials.
\end{repcor}

Corollary \ref{c:artin} is a special case of Theorem \ref{t:posss}, a tracial Positivstellensatz that characterizes positivity of trace polynomials subject to tracial constraints under certain mild regularity assumptions. The proof of Theorem \ref{t:posss} splits into two parts: every tracial inequality is a consequence of a certain tracial Hankel matrix being positive semidefinite (Proposition \ref{p:main}), and this positive semidefiniteness is in turn certified by traces of squares (Proposition \ref{p:sigma}).

Since we are addressing trace polynomials in only one matrix variable and the trace is invariant under conjugation, we could of course restrict evaluations to diagonal matrices and reach the same positivity conclusions.
From this viewpoint, Corollary \ref{c:artin} pertains to positive symmetric polynomials and mean inequalities in combinatorics and statistics \cite{Bul,Tim,CGS,MS,Sra}: a positive trace polynomial corresponds to a sample-size independent power mean (or moment) inequality.

Nevertheless there are benefits to working with general matrix evaluations of trace polynomials. Besides the algebraic structure, there is an intimate connection with the emerging area of free analysis \cite{KVV} (cf. Proposition \ref{p:ax}). 
Evaluations on arbitrary symmetric matrices put the positivity of {\it univariate} trace polynomials under the umbrella of {\it multivariate} trace polynomials and noncommutative tracial inequalities induced by them. If one considers only tuples of matrices of fixed size, Positivstellens\"atze on arbitrary tracial semialgebraic sets are known \cite{PS0,PS,Cim,KSV}. On the other hand, multivariate trace positivity on matrices of all finite sizes is not understood well. Namely, the failure of Connes' embedding conjecture \cite{JNVWY} implies that there is a noncommutative polynomial whose trace is positive on all matrix contractions, but negative on a tuple of operator contractions from a tracial von Neumann algebra \cite{KS}, which obstructs the existence of a clean trace-of-squares certificate for matrix positivity in general. There are however Positivstellens\"atze for positivity of multivariate trace polynomials on von Neumann algebras subject to archimedean constraints \cite{KMV}. In a different direction, tracial inequalities of analytic functions are heavily studied in relation to monotonicity, convexity and entropy in quantum statistical mechanics \cite{Car}. 
The results of this paper
fill the gap in understanding univariate trace polynomials 
by demonstrating that in one operator variable, global positivity on matrices of all sizes implies global operator positivity (in tracial von Neumann algebras).

\section{Univariate trace polynomials}

In this section we introduce terminology and notation that will be used throughout the paper. This includes the notion of a preordering from real algebraic geometry \cite{Mar}, and Proposition \ref{p:sigma} establishes a relation between two preorderings appearing in our positivity certificate.

Let $\sT=\R[\Tr(x^j)\colon j\in\N]$ be the polynomial ring generated by countably many independent symbols $\Tr(x^j)$. Its elements are called (univariate) {\bf pure trace polynomials}. By adjoining an additional variable $x$ to $\sT$ one obtains the ring of (univariate) {\bf trace polynomials} $\fT = \sT \otimes \R[x]$. Let $\Tr:\fT\to\sT$ denote the unital $\sT$-linear map given by $x^j\mapsto\Tr(x^j)$ for all $j\in\N$.

Let $\sym{n}$ denote the space of $n\times n$ real symmetric matrices. We consider matrix evaluations of trace polynomials, where the trace symbols are evaluated using the \emph{normalized} matrix trace on $\sym{n}$. For example, if
$$f=x^2-2\Tr(x)x+2\Tr(x)^2-\Tr(x^2) \qquad\text{and}\qquad X\in\sym{n}$$
then
$$f(X) = X^2-\frac{2\tr(X)}{n}X+\left(\frac{2\tr(X)^2}{n^2}-\frac{\tr(X^2)}{n}\right)I_n\in\sym{n}.$$
One can further elaborate on this viewpoint of a trace polynomial as a family of matricial polynomial functions that are equivariant under the orthogonal conjugation, respect ampliations and have a common degree bound. This perspective is inspired by free analysis \cite{KVV,KS1}, where free functions are equivariant and respect direct sums instead of just ampliations. An analog of the following proposition also holds for multivariate trace polynomials (which are not considered in this paper).

\begin{prop}\label{p:ax}
A sequence $(f_n)_n$ of polynomial maps $f_n:\sym{n}\to\sym{n}$ is given by a trace polynomial if and only if
\begin{enumerate}[(i)]
\item $f_n(OXO^\ti)=Of_n(X)O^\ti$ for every $n\in\N$, $X\in\sym{n}$ and $O\in\ort$;
\item $f_{kn}(I_k\otimes X)=I_k\otimes f_n(X)$ for all $k,n\in\N$ and $X\in\sym{n}$;
\item $\sup_n\deg f_n<\infty$.
\end{enumerate}
\end{prop}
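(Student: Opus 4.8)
The plan is to prove both implications directly from the definitions, with the forward direction being essentially a routine verification and the reverse direction carrying the real content.

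\textbf{The easy direction.} First I would check that if $(f_n)_n$ arises from a trace polynomial $f\in\fT$, then (i)--(iii) hold. Property (iii) is immediate: the degree of $f_n$ in the matrix entries is bounded by $\deg_x f$ plus contributions from the trace symbols, all independent of $n$. Property (i) follows because $\tr$ is conjugation-invariant, so $\Tr(x^j)$ evaluates to the same scalar at $X$ and $OXO^\ti$, while $X^j\mapsto OX^jO^\ti$; hence every monomial, and so $f$ itself, is equivariant. Property (ii) follows because $\tr(I_k\otimes X)=k\tr(X)$ while the matrix size is $kn$, so the \emph{normalized} trace satisfies $\frac{1}{kn}\tr(I_k\otimes X)=\frac1n\tr(X)$; more generally $\frac{1}{kn}\tr((I_k\otimes X)^j)=\frac{1}{kn}\tr(I_k\otimes X^j)=\frac1n\tr(X^j)$, and $(I_k\otimes X)^m=I_k\otimes X^m$, so each monomial of $f$ is ampliation-covariant and so is $f$.

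\textbf{The hard direction.} Now suppose $(f_n)_n$ satisfies (i)--(iii); I want to produce a single $f\in\fT$ with $f_n=f(\,\cdot\,)$ for all $n$. The strategy is: for each fixed $n$, use classical invariant theory for the orthogonal group acting on $\sym{n}$ by conjugation. By (i), each $f_n\colon\sym{n}\to\sym{n}$ is an $\ort$-equivariant polynomial map, i.e. a polynomial covariant of a single symmetric matrix. The first fundamental theorem for $\opm_n$ under orthogonal conjugation (Procesi \cite{Pro}) tells us that the algebra of such equivariant maps is generated, as a module over the invariant ring $\R[\tr(X),\tr(X^2),\dots,\tr(X^n)]$, by $I_n, X, X^2,\dots,X^{n-1}$; equivalently, by the Cayley--Hamilton theorem, $f_n$ can be written as $\sum_{i=0}^{n-1} p_{n,i}(\tr X,\dots,\tr X^{n})\,X^i$ for polynomials $p_{n,i}$. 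Using (iii) to cap $\deg_x$, one rewrites this (absorbing higher powers of $X$ via Cayley--Hamilton) as a trace polynomial expression $g_n$ of bounded degree valid on $\sym{n}$. The crux is then to show these $g_n$ \emph{stabilize}, i.e. come from one universal $f\in\fT$: this is where (ii) enters. The ampliation $X\mapsto I_k\otimes X$ embeds $\sym{n}$ into $\sym{kn}$; comparing $g_{kn}$ evaluated on $I_k\otimes X$ (which by (ii) equals $I_k\otimes g_n(X)$) lets one match coefficients, and since a trace polynomial of bounded degree is determined by its values on symmetric matrices of all \emph{sufficiently large} sizes (there being enough eigenvalue freedom), one concludes the $g_n$ are restrictions of a common $f$.

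\textbf{Main obstacle.} I expect the delicate point to be the stabilization/identification step: showing that finitely many evaluations $f_n$ (for small $n$) might not be enough, but together with covariance under all ampliations they force a single trace polynomial. One must be careful that the representation of $f_n$ as a trace-polynomial expression is not unique for fixed $n$ (there are relations among $\tr(X),\dots,\tr(X^n)$ and higher power sums when $n$ is small — e.g. Newton's identities force dependencies), so the argument should work with $n$ large relative to $\deg f_n$, identify the coefficients there, and then invoke (ii) to descend to all $n$. A clean way to organize this is to diagonalize (permissible since everything is conjugation-equivariant): an $\ort$-equivariant polynomial map restricted to diagonal matrices is a symmetric (under $S_n$) polynomial map into diagonal matrices, and the bounded-degree symmetric polynomials in $n$ variables that are ``consistent across adding a coordinate equal to one of the existing ones'' (the diagonal shadow of ampliation by $I_k$) are precisely the ones coming from power-sum expressions of bounded degree, i.e. from $\sT$. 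Turning this consistency condition into the precise statement that the coefficients are eventually constant in $n$ is the technical heart of the proof.
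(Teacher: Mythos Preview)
Your proposal is correct and follows essentially the same route as the paper: invoke Procesi's invariant theory to write each $f_n$ as the restriction of a trace polynomial $\tilde f_n$ of degree at most $d=\sup_n\deg f_n$, then use the ampliation condition (ii) together with the absence of tracial identities of degree $\le d$ on $\sym{d+1}$ to show these $\tilde f_n$ stabilize to a single $f\in\fT$. The paper's execution of the stabilization is a crisp two-step application of (ii)---first showing $\tilde f_{k(d+1)}=\tilde f_{d+1}$ for all $k$ by evaluating on $\sym{d+1}$, then showing $f_k=\tilde f_{d+1}|_{\sym{k}}$ by evaluating $f_{(d+1)k}$ on $I_{d+1}\otimes\sym{k}$---which is precisely the ``identify coefficients at large $n$, then descend via ampliation'' strategy you outlined.
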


\begin{proof}
On the polynomial ring $\fT$, consider the degree function $\deg$
given by $\deg x=1$ and $\deg \Tr(x^j)=j$ for $j\in\N$.

$(\Rightarrow)$ If $f\in\fT$ and $\deg f=d$, then the polynomial maps $f_n = f|_{\sym{n}}$ clearly satisfy (i) and (ii), and $\deg f_n\le d$ for all $n\in\N$.

$(\Leftarrow)$ Since $f_n$ is $\ort$-equivariant, there exists $\tilde{f}_n\in\fT$ with $\deg f_n=\deg \tilde{f}_n$ such that $f_n = \tilde{f}_n|_{\sym{n}}$, see e.g. \cite[Theorem 7.3]{Pro}. Let $\deg f_n\le d$ for all $n\in\N$. By (iii), for all $k\in\N$ and $X\in\sym{d+1}$ we have
$$I_k\otimes \tilde{f}_{k(d+1)}(X)=f_{k(d+1)}(I_k\otimes X)=I_k \otimes f_{d+1}(X)=I_k \otimes \tilde{f}_{d+1}(X),$$
so $\tilde{f}_{k(d+1)}=\tilde{f}_{d+1}$ for all $k$ since there are no tracial identities for $\sym{d+1}$ of degree $d$ \cite[Proposition 8.3]{Pro}. A further application of (iii) then gives
$$I_{d+1}\otimes f_k(X)
= f_{(d+1)k}(I_{d+1}\otimes X)
= I_{d+1}\otimes \tilde{f}_{(d+1)k}(X)
= I_{d+1}\otimes \tilde{f}_{d+1}(X)$$
for all $k\in\N$ and $X\in\sym{k}$. Therefore the trace polynomial $f=\tilde{f}_{d+1}$ yields the sequence $(f_n)_n$.
\end{proof}

A direct consequence of Proposition \ref{p:ax} is the description of pure trace polynomials as sequences of polynomial functions.

\begin{cor}
A sequence $(p_n)_n$ of polynomial functions $p_n:\sym{n}\to\R$ is given by a pure trace polynomial 
if and only if
\begin{enumerate}[(i)]
	\item $p_n$ is $\ort$-invariant for every $n\in\N$;
	\item $p_{kn}(I_k\otimes X)=p_n(X)$ for all $k,n\in\N$ and $X\in\sym{n}$;
	\item $\sup_n\deg p_n<\infty$.
\end{enumerate}
\end{cor}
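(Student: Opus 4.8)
The plan is to reduce the statement to Proposition \ref{p:ax} by exploiting the fact that a pure trace polynomial $p\in\sT$ evaluates at $X\in\sym{n}$ to the scalar matrix $p(X)I_n$, so that a scalar-valued sequence $(p_n)_n$ and the matrix-valued sequence $f_n:=p_n\cdot I_n$ should carry the same data. For the forward implication it suffices to recall that the normalized trace is invariant under orthogonal conjugation and that $\frac1{kn}\tr\big((I_k\otimes X)^j\big)=\frac1n\tr(X^j)$; hence $p|_{\sym{n}}$ satisfies (i)--(iii), with $\deg p$ (for the degree function $\deg\Tr(x^j)=j$) providing a uniform degree bound.

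For the converse, given $(p_n)_n$ satisfying (i)--(iii), I would set $f_n(X):=p_n(X)I_n$, a polynomial map $\sym{n}\to\sym{n}$, and check the hypotheses of Proposition \ref{p:ax}: orthogonal equivariance because $OI_nO^\ti=I_n$, so $f_n(OXO^\ti)=p_n(X)I_n=Of_n(X)O^\ti$; compatibility with ampliations because $f_{kn}(I_k\otimes X)=p_{kn}(I_k\otimes X)I_{kn}=p_n(X)(I_k\otimes I_n)=I_k\otimes f_n(X)$; and $\sup_n\deg f_n=\sup_n\deg p_n<\infty$. Proposition \ref{p:ax} then yields a trace polynomial $f\in\fT$, say of degree $d$, with $f|_{\sym{n}}=f_n$ for all $n$.

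The main obstacle is to verify that this $f$ is in fact a \emph{pure} trace polynomial, i.e.\ lies in $\sT$: a priori Proposition \ref{p:ax} only gives an $f\in\fT$ whose matrix values all happen to be scalar matrices, and the genuine $x$-dependence must be ruled out. Writing $f=\sum_{j=0}^d g_jx^j$ with $g_j\in\sT$ and $\deg g_j\le d$, I would evaluate on a diagonal $X\in\sym{d+1}$ with distinct diagonal entries $\lambda_1,\dots,\lambda_{d+1}$; then $f(X)$ is diagonal with $i$-th entry $\sum_j g_j(X)\lambda_i^j$, and since $f(X)=p_{d+1}(X)I_{d+1}$ is scalar, a polynomial of degree $\le d$ takes the same value at $d+1$ distinct points, forcing $g_j(X)=0$ for $j\ge1$. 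Orthogonal invariance of the $g_j$ then gives $g_j=0$ on the Zariski-dense set of matrices in $\sym{d+1}$ with simple spectrum, hence on all of $\sym{d+1}$; as $\deg g_j\le d$ and there are no tracial identities of degree $d$ for $\sym{d+1}$ \cite[Proposition 8.3]{Pro}, we conclude $g_j=0$ in $\sT$. Thus $f=g_0\in\sT$, and comparing $p_n(X)I_n=f_n(X)=g_0(X)I_n$ gives $p_n=g_0|_{\sym{n}}$ for every $n$, so $(p_n)_n$ is given by the pure trace polynomial $g_0$.
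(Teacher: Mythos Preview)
Your argument is correct and follows exactly the route the paper intends: the corollary is stated as a direct consequence of Proposition~\ref{p:ax}, and you carry this out by passing from $(p_n)_n$ to the matrix-valued sequence $f_n=p_n\cdot I_n$, invoking the proposition, and then verifying that the resulting $f\in\fT$ actually lies in $\sT$. Your distinct-eigenvalue argument for the last step is clean and correct; a marginally shorter alternative is to note that $p_{d+1}$, being $\operatorname{O}_{d+1}(\R)$-invariant and scalar-valued, is already represented by some $\tilde p\in\sT$ of degree $\le d$ by \cite[Theorem 7.3]{Pro}, and then $f=\tilde p$ by the absence of tracial identities---but this is the same idea repackaged.
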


To a set of constraints $S\subset\fT$ we associate its {\bf positivity set}
$$\cK_S = 
\bigcup_{n\in\N}\left\{
X\in\sym{n}\colon
s(X)\succeq0 \text{ for all } s\in S
\right\}.
$$

\subsection{Preorderings and Hankel matrices}

Given a commutative ring $R$ and $S\subset R$, the {\it preordering} generated by $S$ is the smallest set $P\subseteq R$ containing $S$ that is closed under addition and multiplication, and $r^2\in P$ for every $r\in R$ \cite[Section 2.1]{Mar}.

For $j=1,\dots,m$ let $\sigma_j(X)\in R$ be such that
\begin{equation}\label{e:char}
t^m-\sigma_1(X)t^{m-1}+\cdots+(-1)^{m-1}\sigma_{m-1}(X)t+(-1)^m\sigma_m(X) \in R[t]
\end{equation}
is the characteristic polynomial of $X\in \opm_m(R)$. In other words, $\sigma_j(X)$ is the trace of the $j$th exterior power of $X$, and in particular $\sigma_1(X)=\tr(X)$. Note that when $R$ is a real closed field, $X$ is positive semidefinite if and only if $\sigma_1(X),\dots,\sigma_m(X)\ge0$ by the Descartes rule of signs.

Let $d\in\N$. We say that a $(d+1)\times(d+1)$ (symmetric) matrix $H$ is a {\it Hankel matrix} if $H_{11}=1$ and $H_{i_1j_1}=H_{i_2j_2}$ for $i_1+j_1=i_2+j_2$. The Hankel matrix over $\sT$ whose $(i,j)$-entry equals $\Tr(x^{i+j-2})$ for $1\le i,j\le d+1$ is denoted $\han_d$:
$$\han_d = 
\begin{pmatrix}
1 & \Tr(x) & \Tr(x^2) & \cdots & \Tr(x^d) \\
\Tr(x) & \addots & \addots & \addots & \Tr(x^{d+1}) \\
\Tr(x^2) & \addots & \addots & \addots & \vdots \\
\vdots & \addots & \addots & \addots & \vdots \\
\Tr(x^d) & \Tr(x^{d+1}) & \cdots & \cdots & \Tr(x^{2d})
\end{pmatrix}.
$$
For each $d\in\N$ we consider finitely generated polynomial rings
$$\sT_d=\R[\Tr(x^j)\colon j\le d]\subset\sT\qquad\text{and}\qquad \fT_d = \sT_d\otimes \R[x]\subset\fT.$$
Let $\Pi_d$ be the preordering in $\sT_{2d}$ generated by $\{\sigma_j(\han_d)\colon j=1,\dots,d+1 \}$, 
and let $\Omega$ be the preordering in $\sT$ generated by $\{\Tr(f^2)\colon f\in\fT \}$. As demonstrated in the next section, both preorderings can be used to certify positivity. The upside of $\Omega$ is that it does so in terms of squares, which are the most basic building blocks of positivity. However, $\Omega$ is not a finitely generated preordering (which would not change even if only polynomials of bounded degree were considered). On the other hand, $\Pi_d$ is a finitely generated preordering, and thus more in line with classical results in real algebraic geometry.

\begin{lem}\label{l:tr}
Let $A\in\opm_m(\fT)$ and $C_1,C_2\in\opm_m(\sT)$. If $B\in\opm_m(\sT)$ is the matrix obtained by applying $\Tr$ to $A$ entry-wise, then
$$\tr(C_1BC_2)=\sum_{k=1}^m \Tr\big((C_1AC_2)_{kk}\big).$$
\end{lem}

\begin{proof}
Straightforward.
\end{proof}

The following proposition shows that, inside the field of fractions of pure trace polynomials, $\Pi_d$ is generated by $\Omega$.

\begin{prop}\label{p:sigma}
$\sigma_j(\han_d)$ is a quotient of elements from $\Omega$ for every $j,d\in\N$ with $j\le d+1$.
\end{prop}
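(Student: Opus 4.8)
The plan is to reduce the statement to the case of a single principal minor of $\han_d$, and then to carry out the classical passage from moments to orthogonal polynomials (the Gram--Schmidt determinant construction) inside the ring $\fT$, with $\Tr$ playing the role of integration against a measure.

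First, since $\sigma_j(X)$ is the trace of the $j$th exterior power, $\sigma_j(\han_d)=\sum_{|I|=j}\det(\han_d[I,I])$ is the sum of the $j\times j$ principal minors of $\han_d$, where $\han_d[I,I]$ denotes the principal submatrix on rows and columns indexed by $I\subseteq\{1,\dots,d+1\}$. Quotients of elements of $\Omega$ are closed under finite sums, because $\tfrac{a_1}{b_1}+\tfrac{a_2}{b_2}=\tfrac{a_1b_2+a_2b_1}{b_1b_2}$ and $\Omega$ is closed under addition and multiplication. Hence it suffices to prove that each principal minor $\det(\han_d[I,I])$ is a quotient of elements of $\Omega$.

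Fix $I$, put $T=\{t_1<\dots<t_j\}=\{i-1\colon i\in I\}\subseteq\{0,\dots,d\}$, and equip $\fT$ with the $\sT$-bilinear form $\langle p,q\rangle:=\Tr(pq)$. Then the chosen minor is the Gram determinant of $x^{t_1},\dots,x^{t_j}$ with respect to $\langle\,\cdot\,,\,\cdot\,\rangle$. Write $D_k$ for the Gram determinant of $x^{t_1},\dots,x^{t_k}$, so $D_0=1$ and $D_j=\det(\han_d[I,I])$. For $1\le k\le j$ I would introduce the element $v_k\in\fT$ obtained by expanding along the last column the determinant of the $k\times k$ matrix having entries $\Tr(x^{t_a+t_b})$ in columns $b=1,\dots,k-1$ (rows $a=1,\dots,k$) and $x^{t_a}$ in the last column; thus $v_k=\sum_{a=1}^k c_a x^{t_a}$ with $c_a\in\sT$ and $c_k=D_{k-1}$. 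The key computation is that, by multilinearity of the determinant in its last column, $\langle v_k,x^{t_a}\rangle$ equals the determinant with last column $(\Tr(x^{t_i+t_a}))_i$; for $a<k$ this matrix has two equal columns, hence $\langle v_k,x^{t_a}\rangle=0$, while for $a=k$ it is $\han_d$ restricted to $T_k=\{t_1,\dots,t_k\}$, so $\langle v_k,x^{t_k}\rangle=D_k$. Therefore
\[
\Tr(v_k^2)=\langle v_k,v_k\rangle=\sum_{a=1}^k c_a\langle v_k,x^{t_a}\rangle=c_kD_k=D_{k-1}D_k .
\]
Since $D_{k-1}$ is a nonzero element of the integral domain $\sT$ (for instance because $\han_d$ takes a positive definite value at a diagonal matrix of size $\ge d+1$ with $d+1$ distinct diagonal entries, by a Vandermonde argument, so all its principal minors are positive there), this says $D_k=\Tr(v_k^2)/D_{k-1}$ in the field of fractions of $\sT$. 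As $v_k\in\fT$, each $\Tr(v_k^2)$ is a generator of $\Omega$; starting from $D_0=1\in\Omega$ and iterating, $D_k$ is a quotient of products of generators of $\Omega$, hence a quotient of elements of $\Omega$, for every $k$. In particular $D_j=\det(\han_d[I,I])$ is, and together with the first step this proves the proposition.

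I expect no essential obstacle here: the argument is just the standard construction of orthogonal polynomials via moment determinants, transplanted to $\fT$. The points that require care are organizational ones: keeping $v_k$ inside the univariate ring $\fT$ (so that $\Tr(v_k^2)$ is a legitimate generator of $\Omega$ — in particular one must resist introducing several matrix variables), performing no division until the very end, and verifying that the denominators $D_{k-1}$ are nonzero in $\sT$ so that the resulting quotients are meaningful.
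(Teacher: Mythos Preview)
Your argument is correct and takes a genuinely different, more elementary and constructive route than the paper's. The paper invokes an external Positivstellensatz for generic symmetric matrices \cite{KSV} to obtain an identity $p_1\sigma_j(\Xi)=\sigma_j(\Xi)^{2k}+p_2$ with $p_1,p_2$ in the preordering generated by $\tr(h(\Xi)^2)$ and $\tr(h(\Xi)^2\Xi)$, then specializes $\Xi\mapsto\han_d$ and checks (via the factorization $\han_d=\Tr(ww^\ti)$ with $w^\ti=(1\ x\ \cdots\ x^d)$) that $\tr(h(\han_d)^2\han_d)$ lands in $\Omega$. You instead decompose $\sigma_j(\han_d)$ into its principal minors and realize each minor as a telescoping quotient via the classical Gram--Schmidt determinant identity $\Tr(v_k^2)=D_{k-1}D_k$. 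This is self-contained, avoids any external citation, and yields an explicit numerator and denominator in $\Omega$ (indeed, each a product of single generators $\Tr(v_i^2)$); the paper's approach, though less explicit and non-constructive in the exponent $k$, situates the statement inside a general matrix Positivstellensatz and makes more structurally visible why the generators of $\Omega$ must be traces of squares of \emph{trace} polynomials rather than merely elements of $\R[x]$.
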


\begin{proof}
Let $\Xi$ be the generic $(d+1)\times (d+1)$ symmetric matrix; that is, entries of $\Xi$ are commuting indeterminates, related only by $\Xi$ being symmetric. Let $A$ be the real polynomial algebra generated by the entries of $\Xi$, and let $R$ be its real subalgebra generated by $\{\sigma_i(\Xi)\colon 1\le i\le d+1 \}$. Let $P$ be the preordering in $R$ generated by
$$\{\tr(h(\Xi)^2),\,\tr(h(\Xi)^2\Xi)\colon h\in\fT \}.$$
By \cite[Lemma 4.1 and Theorem 4.13]{KSV} there exist $p_1,p_2\in P$ and $k\in\N$ such that 
\begin{equation}\label{e:pos}
p_1\sigma_j(\Xi)=\sigma_j(\Xi)^{2k}+p_2.
\end{equation}
Let $w^\ti = (1\, x\, \cdots \, x^d)$; then $\han_d$ is obtained by applying $\Tr$ to $ww^\ti \in\opm_{d+1}(\R[x])$ entry-wise. If $h\in\fT$, then $h(\han_d)\in \opm_{d+1}(\sT)$ and $h(\han_d)w \in \fT^{d+1}$; hence by Lemma \ref{l:tr},
\begin{equation}\label{e:hanksquare}
\tr\left(h(\han_d)^2\,\han_d\right)=\tr\left(h(\han_d)\,\han_d\,h(\han_d)\right) =
\sum_{k=1}^{d+1} \Tr\big(\left(h(\han_d)w\right)_k\left(h(\han_d)w\right)_k^\ti\big) \in\Omega
\end{equation}
If $q_i$ is obtained from $p_i$ by replacing $\Xi$ with $\han_d$, then $q_1,q_2\in \Omega$ by \eqref{e:hanksquare}. Further, $q_1\neq0$ by \eqref{e:pos} since the right-hand side is strictly positive when evaluated at a positive definite $(d+1)\times (d+1)$ Hankel matrix. Therefore $\omega_1=\sigma_j(\han_d)^{2k}+q_2$ and $\omega_2=q_1$ satisfy $\sigma_j(\han_d) = \frac{\omega_1}{\omega_2}$ by \eqref{e:pos}.
\end{proof}

For Proposition \ref{p:sigma} it is crucial that $\Omega$ is generated not only by traces of squares of polynomials in $\R[x]$, but traces of squares of trace polynomials; see Example \ref{ex:tr} below. Furthermore, quotients in Proposition \ref{p:sigma} are necessary, as demonstrated by the following example.

\begin{exa}\label{e:frac}
Let
\begin{align*}
f=\sigma_2(\han_2) 
&= \Tr(x^4)\Tr(x^2)-\Tr(x^3)^2+\Tr(x^4)-\Tr(x^2)^2+\Tr(x^2)-\Tr(x)^2 \\
&= \frac{\Tr\big((\Tr(x^2)x^2-\Tr(x^3)x)^2\big)}{\Tr(x^2)}
+\Tr\big((x^2-\Tr(x^2))^2\big)
+\Tr\big((x-\Tr(x))^2\big).
\end{align*}
On the polynomial ring $\fT$, consider the degree function $\deg$
given by $\deg x=1$ and $\deg \Tr(x^j)=j$ as in the proof of Proposition \ref{p:ax}.
Then $\deg f=6$. 
Furthermore, one observes that if $\omega\in\Omega$ with $\deg \omega=6$ contains the monomial $\Tr(x^3)^2$ with a negative sign, then it must contain the monomial $\Tr(x^6)$ with a positive sign. Since this fails for $f$, we conclude that $f\notin\Omega$.
\end{exa}

\subsection{Univariate trace polynomials as multivariate polynomials}

We will frequently view trace polynomials in $\fT_d$ as polynomials in $d+1$ variables $$t_0=x,\ t_1=\Tr(x),\ \dots,\ t_d=\Tr(x^d).$$
Thus they can evaluated at points in the real affine space $\R^{1+d}$. To avoid confusion with matrix evaluations, we write $f[\gamma]\in\R$ for such an evaluation of $f\in \fT_d$ at $\gamma\in\R^{1+d}$. Furthermore, if $f\in\sT_d$, then we also evaluate it at $\gamma'\in \R^d$ as $f[\gamma']$, as the inclusion of rings $\sT_d\subset\fT_d$ corresponds to the projection $\R^{1+d}=\R\times\R^d\to \R^d$.

\begin{exa}\label{ex:tr}
If $f=\sigma_2(\han_1)=\det(\han_1)$, then
$$f=\Tr(x^2)-\Tr(x)^2 = \Tr\big((x-\Tr(x))^2\big) \in\Omega.$$
On the other hand, we claim that $f$ cannot be written as a quotient of elements from 
the preordering in $\sT$ generated by $\{\Tr(p^2)\colon p\in\R[x] \}$; that is, trace polynomials are essential for sum-of-squares representations.

Suppose this is not true. Note that $f$ is a quadratic in the generators of $\sT$, while $\Tr(p^2)$ for $p\in\R[x]$ are linear in generators of $\sT$. Let $d\in\N$ be such that $f$ is a quotient of elements from the preordering in $\sT_{2d}$ generated by $\{\Tr(p_1^2),\dots,\Tr(p_\ell^2)\}$ for some $p_1,\dots,p_\ell\in\R[x]$ of degree at most $d$. As above, we view the generators $t_j=\Tr(x^j)$ as coordinates of the real affine space $\R^{2d}$ corresponding to the polynomial ring $\sT_{2d}$. Observe that then $\det(\han_1[\gamma])\ge0$ implies $\han_1[\gamma]\succeq0$ since $(\han_1)_{11}=1$. Therefore
\begin{equation}\label{e:polyhedron}
\begin{split}
&\ \{\gamma\in\R^{2d}\colon \han_1[\gamma]\succeq 0 \} \\
= &\ \{\gamma\in\R^{2d}\colon f[\gamma]\ge0 \} \\
\supseteq &\ \{\gamma\in\R^{2d}\colon\Tr(p_1^2)[\gamma]\ge0,\dots,\Tr(p_\ell^2)[\gamma]\ge0 \}=:W \\
\supseteq &\ \{\gamma\in\R^{2d}\colon\Tr(p^2)[\gamma]\ge0 \text{ for all } p\in\R[x],\ \deg p\le d\} \\
= &\ \{\gamma\in\R^{2d}\colon \han_d[\gamma]\succeq 0 \}.
\end{split}
\end{equation}
Note that $W$ is a polyhedron. Since the projections of $\{\gamma\colon \han_1[\gamma]\succeq 0 \}$ and $\{\gamma\colon \han_d[\gamma]\succeq 0 \}$ onto the $(t_1,t_2)$-plane coincide, the inclusions \eqref{e:polyhedron} imply that the projections of $W$ and $\{\gamma\colon f[\gamma]\ge0\}$ onto the $(t_1,t_2)$-plane also coincide. But then the projection of $\{\gamma\colon f[\gamma]\ge0\}$ onto this plane is a polyhedron, which is impossible since $f$ is quadratic.
\end{exa}

Finally, to a finite set $S\subset\sT_{2d}$ we assign the basic closed semialgebraic set
$$L_S = 
\{\gamma\in\R^{2d}\colon s[\gamma]\ge0 \text{ for all } s\in S \text{ and }
\han_d[\gamma]\succeq0
\}.$$

\section{Positivstellensatz for univariate trace polynomials}

In this section we prove our main result (Theorem \ref{t:posss}) which describes trace polynomials that are positive on tracial semialgebraic sets subject to certain mild regularity assumptions. As a corollary we characterize globally positive trace polynomials (Corollary \ref{c:artin}). Examples justifying the assumptions in Theorem \ref{t:posss} are also given.

\subsection{Main result}

At the core of our Positivstellensatz is the observation that the points in $L_S$, which originate from tracial evaluations on matrices in $\cK_S$, are dense in the interior of $L_S$. More precisely, we require the following statement, which relies on a solution of the truncated moment problem \cite{CF1}. For a classical application of moment problems for deriving rational sum-of-squares certificates of positivity, see \cite{PV}.

\begin{prop}\label{p:main}
Let $S\subset\sT_{2d}$ be a finite set and $g\in\fT_{2d}\setminus\{0\}$. For every interior point $\beta\in \R\times L_S\subset\R^{1+2d}$ and $\ve>0$ there exists $X\in\cK_S$ with an eigenpair $(\lambda,v)$ such that
\begin{enumerate}[(i)]
	\item $|\beta_j-\Tr(X^j)|<\ve$ for $j=1,\dots,2d$;
	\item $|\beta_0-\lambda|<\ve$;
	\item $v^\ti g(X)v\neq0$.
\end{enumerate}
\end{prop}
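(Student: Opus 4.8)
The plan is to produce the desired $X$ as a diagonal (hence symmetric) matrix whose eigenvalue multiplicities realize an atomic probability measure obtained from the truncated Hamburger moment problem. Write $\gamma=(\beta_1,\dots,\beta_{2d})\in\R^{2d}$, so that $\beta$ being an interior point of $\R\times L_S$ just means $\gamma$ lies in the interior of $L_S$. I first claim $\han_d[\gamma]\succ0$: if some nonzero $u\in\R^{d+1}$ were in its kernel, then $\eta\mapsto u^\ti\han_d[\eta]u$ would be an affine function on $\R^{2d}$ that is nonnegative on $L_S$, vanishes at $\gamma$, and is nonconstant (it is strictly positive at the moment vector of the uniform measure on $[0,1]$, whose Hankel matrix is the Hilbert matrix), so $\gamma$ could not be interior to $L_S$. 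Hence $\han_d[\gamma]\succ0$, and by continuity there is $\delta\in(0,\ve/2]$ with $\han_d[\gamma']\succ0$ and $\gamma'\in L_S$ whenever $|\gamma'-\gamma|\le\delta$. Since $g\ne0$ in $\fT_{2d}=\R[t_0,\dots,t_{2d}]$, its zero set is nowhere dense, so I may choose $(\beta_0',\gamma')\in\R\times\R^{2d}$ with $|\beta_0'-\beta_0|<\ve$, $|\gamma'-\gamma|<\delta$ and $g[\beta_0',\gamma']\ne0$; note $\han_d[\gamma']\succ0$ and $\gamma'\in L_S$.

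Next I engineer a point mass at $\beta_0'$ without disturbing the moments. Let $v'=(1,\beta_0',(\beta_0')^2,\dots,(\beta_0')^d)^\ti$, so $v'v'^\ti$ is the Hankel matrix of the moment sequence of $\delta_{\beta_0'}$. Since $\han_d[\gamma']\succ0$, for small enough $c\in(0,1)$ the matrix $\frac1{1-c}(\han_d[\gamma']-c\,v'v'^\ti)$ is a positive definite Hankel matrix with corner entry $1$; fix such a $c$. By the solution of the truncated Hamburger moment problem (pass to a rank-preserving extension of size $d+2$ and invoke \cite{CF1}) there is a finitely supported probability measure $\rho$ whose moments are the entries of this matrix. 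Then $\mu:=c\,\delta_{\beta_0'}+(1-c)\rho=\sum_{i=1}^k w_i\delta_{\xi_i}$, with $\xi_1=\beta_0'$, $w_i>0$ and $\sum_i w_i=1$, is a finitely supported probability measure with an atom at $\beta_0'$ and, since the corresponding Hankel matrices add up to $\han_d[\gamma']$, with $\int t^j\,d\mu=\beta_j'$ for $j=1,\dots,2d$.

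Finally I discretize $\mu$. For each large $n$, pick $m_i\in\N$ with $\sum_i m_i=n$, $m_1\ge1$ and $m_i/n\to w_i$ (ordinary rounding), and set $X=X_n=\bigoplus_{i=1}^k\xi_i I_{m_i}\in\sym n$ with eigenpair $(\lambda,v)=(\beta_0',e_1)$, where $e_1$ is the first standard basis vector (lying in the first block). As $n\to\infty$, $\Tr(X^j)=\frac1n\sum_i m_i\xi_i^j\to\int t^j\,d\mu=\beta_j'$ for each $j$; hence for $n$ large $|\Tr(X^j)-\beta_j|\le|\Tr(X^j)-\beta_j'|+|\beta_j'-\beta_j|<\ve$, giving (i), while $(\Tr(X),\dots,\Tr(X^{2d}))$ lies within $\delta$ of $\gamma$, hence in $L_S$, so $s(X)\succeq0$ for every $s\in S$ and $X\in\cK_S$. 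Condition (ii) holds since $\lambda=\beta_0'$; and since $v$ is an eigenvector and $g\in\R[t_0,\dots,t_{2d}]$, one has $v^\ti g(X)v=g[\lambda,\Tr(X),\dots,\Tr(X^{2d})]\to g[\beta_0',\gamma']\ne0$, giving (iii). Taking $n$ large enough for all of these finishes the proof.

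The crux is arranging (i)--(iii) simultaneously: a quadrature rule for $\gamma$ itself need not have a node near $\beta_0$, and $g$ may well vanish at $(\beta_0,\gamma)$. Both obstructions are removed by the perturbation to a nearby pair $(\beta_0',\gamma')$ off the hypersurface $\{g=0\}$ — legitimate because $\gamma$ is interior to $L_S$ and $\{g=0\}$ is nowhere dense — together with the rank-one subtraction $\han_d[\gamma']-c\,v'v'^\ti\succ0$, which creates an atom exactly at $\beta_0'$ while keeping the first $2d$ moments equal to $\gamma'$. Everything else is continuity, the truncated moment problem, and rational approximation of weights, so I expect the only real work to be the bookkeeping in that perturbation step and checking the flat-extension input to \cite{CF1}.
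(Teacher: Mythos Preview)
Your proof is correct and follows essentially the same strategy as the paper: perturb inside $\R\times L_S$ to make $\han_d$ strictly positive definite and $g$ nonvanishing, invoke the truncated Hamburger moment problem to obtain a finitely atomic representing measure, approximate the weights by rationals, and read off a diagonal $X$ with the required eigenpair. The only tactical difference is in how the atom near $\beta_0$ is produced: you force it in advance via the rank-one subtraction $\han_d[\gamma']-c\,v'v'^{\ti}\succ0$ before solving the moment problem, whereas the paper first solves the moment problem for $\gamma'$ and then grafts on an extra atom of weight $\tfrac1n$ at $\beta_0$ during the rational approximation (using $\sum_i m_i=n-1$ and $|\beta_0|<n\tilde\ve$). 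Both devices serve the same purpose and the remaining bookkeeping is identical.
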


\begin{proof}
Without loss of generality we can assume that $0\notin S$. Then the interior of $L_S$ is contained in the closure of
$$\{\gamma\in\R^{2d}\colon s[\gamma]>0 \text{ for all } s\in S \text{ and } \han_d[\gamma]\succ0  \}.$$
After an arbitrarily small perturbation of $\beta$ we can therefore assume that
$$s[\beta]>0 \text{ for all } s\in S,\ \han_d[\beta]\succ0 \text{ and } g[\beta]\neq0.$$
For positive definite Hankel matrices, the truncated Hamburger moment problem is solvable \cite[Theorems I.1 and I.3]{AK}; cf. \cite[Theorem 3.9]{CF}. Thus there exists a $(d+1)$-atomic measure $\mu$ on $\R$ representing $\han_d[\beta]$, i.e., there are $\alpha_1,\dots,\alpha_{d+1}\in\R$ and $\lambda_1,\dots,\lambda_{d+1}\in\R_{>0}$ with $\sum_i\lambda_i=1$ such that $\mu=\sum_i\lambda_i\delta_{\alpha_i}$, where $\delta$ stands for the Dirac measure, satisfies
$$\beta_j=\int t^j\, {\rm d}\mu,\qquad j=1,\dots,2d.$$

Next we approximate $\lambda_i$ by rationals with a denominator that is much larger than $\beta_0$.
That is, for every $\tilde\ve>0$ there exist $n,m_1,\dots,m_{d+1} \in\N$ such that
\begin{enumerate}[(1)]
	\item $|n\lambda_i-m_i|< n \tilde\ve$ for $i=1,\dots,d+1$;
	\item $\sum_i m_i = n-1$;
	\item $|\beta_0|< n \tilde\ve$.
\end{enumerate}
Denote
$$\tilde\mu=\frac{1}{n}\delta_{\beta_0}+\sum_i\frac{m_i}{n}\delta_{\alpha_i}$$
and set
$$\tilde\beta_0=\beta_0 \qquad \text{and}\qquad
\tilde\beta_j=\int t^j\, {\rm d}\tilde\mu \qquad \text{for } j=1,\dots,2d.$$
Since $S$ is finite and we are approximating finitely many values, we can choose $\tilde\ve$ small enough compared to $\ve$ such that
\begin{enumerate}[(1')]
	\item $|\beta_j-\tilde\beta_j|<\ve$ for $j=0,\dots,2d$;
	\item $(\tilde\beta_1,\dots,\tilde\beta_{2d})\in L_S$;
	\item $g[\tilde\beta]\neq0$.
\end{enumerate}
The $n\times n$ diagonal matrix
$$\tilde X= \tilde\beta_0 I_1\oplus \bigoplus_i\alpha_i I_{m_i}$$
satisfies $\frac{1}{n}\tr(\tilde{X}^j) = \tilde\beta_j$ for $1\le j\le 2d$. Consequently $s(X)\ge 0$ for all $s\in S$. Furthermore, $(\tilde\beta_0,e_1)$ is an eigenpair for $X$ and $e_1^\ti g(X)e_1=g[\tilde\beta]\neq0$. Hence (i)--(iii) are satisfied.
\end{proof}

A subset of a topological space is \emph{regular closed} if it equals the closure of its interior.

\begin{thm}\label{t:posss}
Given a finite set $S\subset\sT_{2d}$ 
let $P$ be the preordering in $\fT_{2d}$ generated by $S\cup \Pi_d$,
and let $Q$ be the preordering in $\fT$ generated by $S\cup\Omega$.
Suppose that $L_S$ is regular closed in $\R^{2d}$.
Then the following are equivalent for $f\in\fT_{2d}$:
\begin{enumerate}[(i)]
	\item $f(X)\succeq0$ for all $X\in \cK_S$;
	\item there are $p_1,p_2\in P$ and $k\in\N$ such that
	\begin{equation}\label{e:ks}
	p_1f=f^{2k}+p_2;
	\end{equation}
	\item $f[\beta]\ge0$ for all $\beta\in \R\times L_S$;
	\item there is $q\in Q\setminus\{0\}$ such that $qf \in Q$.
\end{enumerate}
\end{thm}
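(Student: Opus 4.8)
The plan is to prove the cyclic chain of implications $(i)\Rightarrow(iii)\Rightarrow(ii)\Rightarrow(i)$ and separately $(i)\Leftrightarrow(iv)$, using Proposition \ref{p:main} for the first nontrivial arrow, the classical Krivine--Stengle Positivstellensatz for the second, and Proposition \ref{p:sigma} to bridge the two preorderings $P$ and $Q$ for the last equivalence.

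First I would prove $(i)\Rightarrow(iii)$. Suppose toward a contradiction that $f[\beta]<0$ for some $\beta\in\R\times L_S$. Since $\{\gamma\in\R^{1+2d}\colon f[\gamma]<0\}$ is open and $L_S$ is regular closed (so $\R\times L_S$ is regular closed in $\R^{1+2d}$), we may perturb $\beta$ to an interior point of $\R\times L_S$ still satisfying $f[\beta]<0$. Apply Proposition \ref{p:main} with $g=f$ and $\ve$ small: we obtain $X\in\cK_S$ with an eigenpair $(\lambda,v)$ satisfying $|\beta_j-\Tr(X^j)|<\ve$ and $|\beta_0-\lambda|<\ve$. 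Now $v^\ti f(X)v$ is, by definition of matrix evaluation, the value of $f$ as a polynomial in $(x,\Tr(x),\dots,\Tr(x^{2d}))$ evaluated at the point $(\lambda,\Tr(X),\dots,\Tr(X^{2d}))$, which by continuity of the polynomial $f[\cdot]$ is within a prescribed tolerance of $f[\beta]<0$. Hence $v^\ti f(X)v<0$, contradicting $f(X)\succeq0$. The implication $(iii)\Rightarrow(i)$ is immediate: for $X\in\cK_S$ and any eigenpair $(\lambda,v)$ of $X$, the point $(\lambda,\Tr(X),\dots,\Tr(X^{2d}))$ lies in $\R\times L_S$ because $\han_d$ evaluated there is the normalized partial trace of the positive semidefinite matrix $(ww^\ti)$ with $w=(v,Xv,\dots,X^dv)^\ti$ scaled appropriately (so $\han_d[\beta]\succeq0$), and each $s\in S$ is nonnegative there since $s(X)\succeq0$; so $v^\ti f(X)v=f[\beta]\ge0$, and as $v$ ranges over eigenvectors this gives $f(X)\succeq0$.

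For $(iii)\Rightarrow(ii)$ I would invoke the classical Krivine--Stengle Positivstellensatz in the polynomial ring $\fT_{2d}=\R[t_0,\dots,t_{2d}]$: the set $\R\times L_S$ is the basic closed semialgebraic set cut out by the constraints in $S$ together with the $\sigma_j(\han_d)\ge0$ for $j=1,\dots,d$ (using the Descartes-rule observation from Section 2 that a symmetric matrix over a real closed field is PSD iff all $\sigma_j\ge0$). Since $f\ge0$ on this set, Krivine--Stengle yields $p_1,p_2$ in the preordering generated by $S\cup\{\sigma_j(\han_d)\}=S\cup\Pi_d$ (note $\Pi_d$ is precisely this finitely generated preordering) and $k\in\N$ with $p_1f=f^{2k}+p_2$, which is \eqref{e:ks}. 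Then $(ii)\Rightarrow(i)$ is a direct evaluation argument: given $X\in\cK_S$, every generator of $P$ evaluates to a positive semidefinite matrix at $X$ (the elements of $S$ by hypothesis, and $\sigma_j(\han_d(X))\ge0$ because $\han_d(X)\succeq0$ as a normalized partial trace of a PSD matrix), hence so does $p_1(X)$ and $p_2(X)$; feeding a nonzero vector $v$ with $v^\ti f(X)v<0$ into \eqref{e:ks} as $v^\ti p_1(X)v\cdot v^\ti f(X)v$ versus $v^\ti f(X)^{2k}v+v^\ti p_2(X)v$ — here one must be slightly careful since $f(X)$, $p_i(X)$ need not commute, but restricting to an eigenvector of $f(X)$ with negative eigenvalue makes the left side $\le0$ and the right side $>0$ — yields a contradiction.

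Finally, for $(i)\Leftrightarrow(iv)$: from $(ii)$, rewrite \eqref{e:ks} as $p_1f=f^{2k}+p_2$ and use Proposition \ref{p:sigma} to replace every generator $\sigma_j(\han_d)$ of $\Pi_d$ occurring in $p_1,p_2$ by a quotient $\omega_1/\omega_2$ of elements of $\Omega$; clearing the (common, nonzero) denominators produces $q\in Q\setminus\{0\}$ with $qf\in Q$, which is $(iv)$ — one checks the denominators are nonzero because, as in the proof of Proposition \ref{p:sigma}, they are strictly positive when evaluated at a positive definite Hankel matrix. The reverse $(iv)\Rightarrow(i)$ again evaluates at $X\in\cK_S$: $q(X)\succeq0$ and $(qf)(X)\succeq0$ with $q(X)\ne0$ on a dense set of $X$; arguing on the common invariant subspaces or, more cleanly, noting that the elements of $Q$ evaluate to PSD matrices (traces of squares $\Tr(h^2)(X)=\frac1n\tr(h(X)^2)I_n\succeq0$, and $S$ by hypothesis) and using that for each $X$ either $q(X)$ is invertible (then $f(X)=q(X)^{-1/2}(qf)(X)q(X)^{-1/2}\cdot(\text{sign bookkeeping})$... ) — the cleanest route is: $f(X)\succeq0$ fails only on a set of $X$ where $\det q(X)=0$, but that set has empty interior in each $\sym n$ while $\{f(X)\not\succeq0\}$ is open, so $f(X)\succeq0$ everywhere on $\cK_S$.

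I expect the main obstacle to be the bookkeeping in the noncommutative evaluation arguments for $(ii)\Rightarrow(i)$ and $(iv)\Rightarrow(i)$, where $f(X)$ does not commute with $p_1(X)$ or $q(X)$: the fix is to work on the eigenspace of $f(X)$ corresponding to a (hypothetical) negative eigenvalue, or to exploit that $q(X)$, $p_i(X)$ lie in the commutative algebra generated by $X$ (since they are trace polynomials in $x$, hence polynomials in $X$ with scalar coefficients), so in fact everything does commute — this observation dissolves the difficulty but needs to be stated. A secondary subtlety is verifying that the semialgebraic description of $\R\times L_S$ has the exact generators needed so that Krivine--Stengle lands in $\Pi_d$ rather than a larger preordering, which is handled by the Descartes-rule remark already in the paper.
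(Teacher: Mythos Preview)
Your overall architecture matches the paper's: $(i)\Rightarrow(iii)$ via Proposition \ref{p:main}, $(iii)\Rightarrow(ii)$ via Krivine--Stengle, and $(ii)\Rightarrow(iv)$ via Proposition \ref{p:sigma}. Your additional direct arguments $(iii)\Rightarrow(i)$ and $(ii)\Rightarrow(i)$, based on the observation that all trace polynomials evaluate to elements of the commutative algebra $\R[X]$ and hence simultaneously diagonalize, are correct and pleasant.

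The genuine gap is in $(iv)\Rightarrow(i)$. Your density argument ``$\{\det q(X)=0\}$ has empty interior in each $\sym{n}$, while $\{f(X)\not\succeq0\}$ is open'' does not close. First, $q\in Q\subset\fT$ may involve trace symbols of high degree, and for small $n$ there are trace identities: $q$ can vanish identically on $\sym{n}$ (e.g.\ any $q$ divisible by $\Tr(x^2)-\Tr(x)^2$ vanishes on $\sym{1}$), so the zero set need not have empty interior. Second, and more seriously, even when $\{\det q(X)=0\}$ is nowhere dense in $\sym{n}$, you cannot conclude: the set $\{X\in\cK_S:f(X)\not\succeq0\}$ is only relatively open in $\cK_S$, and $\cK_S\cap\sym{n}$ may itself have empty interior in $\sym{n}$ (the hypothesis is that $L_S$ is regular closed in $\R^{2d}$, not that $\cK_S$ is regular closed in any $\sym{n}$). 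Thus a putative bad point $X_0\in\cK_S$ need not admit a nearby $X\in\cK_S$ with $q(X)$ invertible, and the contradiction evaporates.

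The paper repairs exactly this step by reusing Proposition \ref{p:main} a second time, now with $g=q$: given $X\in\cK_S$ and an eigenpair $(\lambda,v)$, one approximates the moment point $(\lambda,\Tr(X),\dots,\Tr(X^{2d}))$ through the interior of $\R\times L_S$ (using regular closedness) to produce matrices $X_k\in\cK_S$, of possibly varying sizes, with eigenpairs $(\lambda_k,v_k)$ converging in moment coordinates and satisfying $v_k^\ti q(X_k)v_k\neq0$. On these $X_k$ your commutativity argument gives $v_k^\ti f(X_k)v_k\ge0$, and continuity of $f[\cdot]$ in moment coordinates yields $v^\ti f(X)v\ge0$. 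The key point you are missing is that the approximation must take place in the $(1{+}2d)$-dimensional moment space, where Proposition \ref{p:main} guarantees one can dodge the zero locus of $q$, rather than inside a fixed $\sym{n}$.
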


\begin{proof}
(iii)$\Rightarrow$(ii) Since the positive semidefiniteness of $\han_d$ is certified by $\sigma_j(\han_d)\ge0$ for $j=1,\dots,d+1$ and $P$ is a finitely generated preordering in a finitely generated polynomial ring, this implication is an instance of the Krivine--Stengle Positivstellensatz \cite[Theorem 2.2.1]{Mar}.

(i)$\Rightarrow$(iii) Follows by Proposition \ref{p:main} (the roles of $g$ and the eigenpair are irrelevant in this instance).

(ii)$\Rightarrow$(iv) Suppose \eqref{e:ks} holds. Without loss of generality assume that $f\neq0$; then $p_1\neq0$ by \eqref{e:ks} since $L_S$ has nonempty interior. By Proposition \ref{p:sigma}, every element of $P$ can be written as a quotient of an element in $Q$ and an element from $\Omega$. So $f$ is a quotient of elements from $Q$ by \eqref{e:ks}.

(iv)$\Rightarrow$(i) Suppose $qf=r$ for $q,r\in Q$ with $q\neq0$, and let $X\in\cK_S$. Let $(\lambda,v)$ be an eigenpair of $X$. Since $L_S$ is regular closed, by Proposition \ref{p:main} there exists a sequence of matrices $X_k \subset \cK_S$ with eigenpairs $(\lambda_k,v_k)$ such that
\[
\begin{split}
\lim_{k\to\infty}\lambda_k &=\lambda, \\
\lim_{k\to\infty}\Tr(X_k^j) &=\Tr(X^j), \qquad j=1,\dots,2d, \\
v_k^\ti q(X_k)v_k &\neq0 \qquad k\in\N.
\end{split}
\]
Since  $q(X_k)f(X_k)=r(X_k)$ and $q(X_k),r(X_k)\succeq0$, we have $v_k^\ti f(X_k)v_k\ge0$ for all $k\in\N$. On the other hand,
\begin{align*}
v^\ti f(X) v
&= f[\lambda,\Tr(X),\dots,\Tr(X^{2d})] \\
&= \lim_{k\to\infty} f[\lambda_k,\Tr(X_k),\dots,\Tr(X_k^{2d})] \\
&= \lim_{k\to\infty} v_k^\ti f(X_k)v_k
\end{align*}
and therefore $v^\ti f(X) v\ge0$. As the eigenpair $(\lambda,v)$ of $X$ was arbitrary, $f(X)$ is positive semidefinite.
\end{proof}

\begin{rem}\label{r:vna}
The regular closed assumption in Theorem \ref{t:posss} is indispensable. Note that sum-of-squares certificates imply not only positivity of matrix evaluations, but also positivity of evaluations in tracial von Neumann algebras. However, there exist pure tracial constraints that are infeasible for matrices of arbitrary finite size, but feasible for infinite-dimensional tracial von Neumann algebras. For example, let
\begin{equation}\label{e:bad}
s_1=\Tr((x-x^2)^2),\qquad s_2 = \sqrt{2}\Tr(x)-1.
\end{equation}
No $X\in\sym{n}$ satisfies $s_1(X)=s_2(X)=0$ (as if $s_1(X)=0$, then $X$ is a projection, so $\Tr(X)\in\mathbb{Q}$). On the other hand, viewing $L^\infty([0,1])$ as an abelian von Neumann algebra with the trace given by the Lebesgue integration, the characteristic function $\chi$ for $[0,\frac{1}{\sqrt{2}}]$ satisfies $s_1(\chi)=s_2(\chi)=0$. Therefore the conclusion (i)$\Rightarrow$(ii) of Theorem \ref{t:posss} fails for $S=\{\pm s_1, \pm s_2\}$ and $f=-1$.
A Positivstellensatz for (multivariate) trace polynomials positive on operators in tracial von Neumann algebras satisfying archimedean constraints is given in \cite[Corollary 4.8]{KMV}.
\end{rem}

\begin{rem}
The denominators $p_1$ and $q$ in parts (ii) and (iv) of Theorem \ref{t:posss} are necessary in general. For (iv), this is demonstrated by Example \ref{e:frac}. 
For (ii), this holds even if the larger preordering $P'$ in $\fT_{2d}$ finitely generated by $S$ and all the principal minors of $\han_d$ is considered. Note that $\fT_{2d}$ is the polynomial ring in $2d+1\ge3$ indeterminates, and the semialgebraic set corresponding to $P'$ equals $\R\times L_S$ and has nonempty interior by the assumption. Then by \cite[Proposition 2.6.2]{Mar} there exists $f\in\fT_{2d}$ that is non-negative on $\R\times L_S$ but $f\notin P'$.
\end{rem}

\begin{rem}\label{r:fix}
When only matrices of a given \emph{fixed} size $n$ are considered, then multivariate trace polynomials that are positive on all $n\times n$ matrices are characterized via traces of squares in the seminal work of Procesi and Schacher \cite{PS0}. The extension of this characterization to positivity on invariant semialgebraic subsets of $n\times n$ matrices is given in \cite{KSV}. However, one cannot deduce our size-independent positivity certificate from such positivity certificates for each fixed size $n$.
The obstacle is the non-existence of a bound on $n$ such that positivity of a trace polynomial $f$ on $n\times n$ matrices would imply positivity of $f$ on all matrices; see Remark \ref{r:bd} below. Thus one cannot simply deduce Theorem \ref{t:posss} from the existing dimension-dependent Positivstellens\"atze \cite{PS0,KSV}.
\end{rem}

\begin{rem}\label{r:bd}
In Theorem \ref{t:posss}, there is no bound on the size $n$ of $X\in \cK_S$ in terms of $d$ for which positivity of $f(X)$ need to be tested to ensure positivity on whole $\cK_S$.
We demonstrate this for $d=2$ and $S=\emptyset$. Let
$$f=\Tr((x-x^2)^2)+(\sqrt{2}\Tr(x)-1)^2 \in \sT_4.$$
We claim that for every $n\in\N$ there exists $\ve>0$ such that $(f-\ve)|_{\sym{n}}\ge0$ but $(f-\ve)|_{\sym{N}}\neq0$ for some $N>n$. Indeed, fix $n$ and let $m>1$ be such that $m(m-1)\ge \sqrt{n}$. Since $f$ is nonzero on $\sym{n}$, $f(0)=1$ and $f(X)\ge 1$ for $X\in\sym{n}$ with $\|X\|\ge m$, $f$ admits a global minimum $\ve>0$ on $\sym{n}$. Therefore $f-\ve\ge0$ on $\sym{n}$. On the other hand, there are $r,N\in\N$ such that $(\sqrt{2}\frac{r}{N}-1)^2<\ve$, so a projection $X\in\sym{N}$ of rank $r$ satisfies $f(X)-\ve<0$. 

Nevertheless, the proof of Theorem \ref{t:posss} (or rather Proposition \ref{p:main}) shows that it suffices to consider only matrices $X$ with at most $d+2$ distinct eigenvalues. In terms of positivity of trace polynomials on finite-dimensional real von Neumann algebras $\R^n$: if one tests positivity with respect to any possible tracial state on $\R^n$ as opposed to only the averaging state, it suffices to check $n=d+2$.
\end{rem}

\subsection{Convex tracial constraints}

The most compelling part of Theorem \ref{t:posss} is the equivalence (i)$\Leftrightarrow$(iv), which refers only to positivity of matrix evaluations and (traces of) squares, but not to coefficients of the characteristic polynomial of the Hankel matrix $\han_d$. Nevertheless, Theorem \ref{t:posss} is only valid under the assumption that the semialgebraic set $L_S$ is regular closed, so Hankel matrices still tacitly lurk around. The following lemma gives a class of constraints where verifying that $L_S$ is regular closed does not require detailed information about the geometry of $L_S$. 

\begin{lem}\label{l:cvx}
Let $S\subset\sT_{2d}$ be finite.
\begin{enumerate}[(a)]
	\item If $\cK_S\cap\sym{n}$ has nonempty interior in $\sym{n}$ for some $n\in\N$, then $L_S$ has nonempty interior in $\R^{2d}$.
	\item If $S$ is given by a linear matrix inequality of the symbols $\Tr(x^j)$ and $\cK_S\cap\sym{n}$ has nonempty interior for some $n\in\N$, then $L_S$ is regular closed.
\end{enumerate}
\end{lem}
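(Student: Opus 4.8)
The plan is to transport everything to the \emph{moment map} $M_n\colon\sym{n}\to\R^{2d}$, $M_n(X)=\big(\tfrac1n\tr X,\dots,\tfrac1n\tr X^{2d}\big)$, via two elementary remarks. First, every $s\in S\subseteq\sT_{2d}$ evaluates on $\sym n$ to a scalar multiple of the identity, so $s(X)\succeq0$ is equivalent to $s[M_n(X)]\ge0$; hence $\cK_S\cap\sym n=M_n^{-1}(A)$, where $A=\{\gamma\in\R^{2d}\colon s[\gamma]\ge0\ \text{for all }s\in S\}$. Second, $\han_d[M_n(X)]$ is $\tfrac1n$ times the entrywise trace of $ww^\ti$ with $w^\ti=(1\ X\ \cdots\ X^d)$, hence positive semidefinite (cf.\ the proof of Proposition~\ref{p:sigma}); so $M_n(\sym n)\subseteq B:=\{\gamma\colon\han_d[\gamma]\succeq0\}$, and therefore $\cK_S\cap\sym n=M_n^{-1}(A\cap B)=M_n^{-1}(L_S)$ for every $n$.

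For (a): pick an interior point $X_0$ of $\cK_S\cap\sym n$. Since the interior is open and matrices with $n$ distinct eigenvalues are dense in $\sym n$, I may assume $X_0$ has $n$ distinct eigenvalues $\lambda_1,\dots,\lambda_n$; I will also need $n\ge 2d$, which is the point requiring care (see the last paragraph). Working in an orthonormal eigenbasis of $X_0$, the differential $DM_n(X_0)$ restricted to diagonal directions $\mathrm{diag}(h_1,\dots,h_n)$ is $h\mapsto\big(\tfrac jn\sum_i\lambda_i^{\,j-1}h_i\big)_{j=1}^{2d}$, a scaled $2d\times n$ Vandermonde matrix; since the $\lambda_i$ are distinct and $n\ge 2d$ it has rank $2d$, so $M_n$ is a submersion at $X_0$. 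By the open mapping property of submersions, $M_n$ carries a neighborhood $U\subseteq\cK_S\cap\sym n=M_n^{-1}(L_S)$ of $X_0$ onto an open neighborhood $V$ of $M_n(X_0)$ in $\R^{2d}$; since $V=M_n(U)\subseteq L_S$, the set $L_S$ has nonempty interior.

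For (b): if $S$ is given by a linear matrix inequality in the symbols $\Tr(x^j)$, then $A$ is closed and convex (a spectrahedron), and so is $B$; hence $L_S=A\cap B$ is closed and convex. By part (a) it has nonempty interior, and a closed convex set with nonempty interior is regular closed: given $p\in L_S$ and an interior point $q$, the half-open segment $[q,p)$ lies in the interior, so $p$ lies in its closure. Thus $L_S$ is regular closed.

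The step I expect to be the main obstacle is the size bookkeeping in (a): the moment map is a submersion onto $\R^{2d}$ only at matrices having at least $2d$ distinct eigenvalues, so one must check that a witness for ``$\cK_S\cap\sym n$ has nonempty interior'' can be taken at matrix size $n\ge 2d$. Everything else --- the identity $\cK_S\cap\sym n=M_n^{-1}(L_S)$, the Vandermonde rank computation, the open mapping theorem for submersions, and the convexity fact used in (b) --- is routine.
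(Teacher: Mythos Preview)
Your approach mirrors the paper's: both push through the moment/power-sum map, show it is open at a matrix with enough distinct eigenvalues, and then use convexity for (b). The only real difference is cosmetic---the paper restricts to diagonal matrices and invokes invariance of domain (factoring $\diag{n}\to\R^n\to\R^{2d}$ through the full power-sum map, which is locally injective on a neighborhood avoiding the permutation orbit), whereas you compute the differential of $M_n$ on diagonal directions and cite the submersion theorem. These are equivalent here.

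The step you flag as the ``main obstacle''---upgrading the witness to size $n\ge 2d$---is in fact the easy one, and the paper dispatches it in one line via \emph{ampliation}. Since each $s\in S\subset\sT_{2d}$ is a pure trace polynomial and normalized traces satisfy $\Tr\big((I_k\otimes X)^j\big)=\Tr(X^j)$, one has $s(I_k\otimes X)=s(X)$ for all $k$. Hence if $X_0\in\sym{n}$ lies in the interior of $\cK_S\cap\sym{n}$ with all $s_i(X_0)>0$, then $I_k\otimes X_0$ lies in the interior of $\cK_S\cap\sym{kn}$ for every $k$; now take $kn\ge 2d$ and perturb to get distinct eigenvalues. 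With this filled in, your argument is complete and essentially identical to the paper's.
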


\begin{proof}
(a) Let $S=\{s_1,\dots,s_\ell\}$. Since $\cK_S\cap\sym{n}$ has nonempty interior, there is $X\in\sym{n}$ such that $s_i(X)\succ0$ for $i=1,\dots,\ell$. Consequently $s_i(I_k\otimes X)=I_k\otimes s_i(X)\succ0$ for $i=1,\dots,\ell$ for every $k\in\N$. Thus we can without loss of generality assume that $n\ge 2d$. Furthermore, if $\diag{n}$ denotes the space of $n\times n$ diagonal matrices, then $\cK_S\cap\diag{n}$ has nonempty interior in $\diag{n}$. Therefore there exists $D_0$ in the interior of $\cK_S\cap\diag{n}$ with pairwise distinct eigenvalues. Let $\cO\subset\cK_S\cap\diag{n}$ be an open neighborhood of $D_0$ such that $P\cO P^{-1}\cap\cO=\emptyset$ for every permutation $n\times n$ matrix $P$. Consider the polynomial map
$$\phi:\diag{n}\to \R^{2d},\qquad D\mapsto (\Tr(D),\dots,\Tr(D^{2d})).$$
Clearly $\phi(\cK_S\cap\diag{n})\subseteq L_S$. Also, the restriction of $\phi$ to $\cO$ is an open map since it is a composition of
$$\cO\to \R^n,\qquad D\mapsto (\Tr(D),\dots,\Tr(D^n))$$
which is open by the Invariance of domain theorem, and the projection $\R^n\to\R^{2d}$. Hence $\phi(D_0)$ is an interior point of $L_S$.

(b) Since $L_S$ is the solution set of a linear matrix inequality (as $\han_d\succeq0$ is already a linear matrix inequality), it is convex. Furthermore, $L_S$ has nonempty interior by (a), so it is regular closed. 
\end{proof}

As a consequence we obtain a tracial version of Artin's solution of Hilbert's 17th problem.

\begin{cor}\label{c:artin}
Let $f\in\fT$. Then $f(X)\succeq0$ for all $X\in\bigcup_n\sym{n}$ if and only if $f$ is a quotient of sums of products of squares and traces of squares of trace polynomials.
\end{cor}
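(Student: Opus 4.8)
The plan is to derive Corollary \ref{c:artin} as a special case of Theorem \ref{t:posss} with $S = \emptyset$, after first checking that the hypothesis of the theorem is satisfied in this case. With $S = \emptyset$, the positivity set $\cK_\emptyset = \bigcup_n \sym{n}$ is all symmetric matrices, and the semialgebraic set $L_\emptyset = \{\gamma \in \R^{2d} : \han_d[\gamma] \succeq 0\}$. The first step is to observe that $L_\emptyset$ is regular closed: it is cut out by the single linear matrix inequality $\han_d \succeq 0$, so Lemma \ref{l:cvx}(b) applies provided $\cK_\emptyset \cap \sym{n}$ has nonempty interior for some $n$ — which is obvious (e.g. take $n \ge 2d$, where a diagonal matrix with distinct entries has a neighborhood of symmetric matrices, or simply note $\sym{n}$ is all of the ambient space). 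Hence Theorem \ref{t:posss} applies.

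Next, given $f \in \fT$, choose $d$ large enough that $f \in \fT_{2d}$ (take $d$ to be at least half the highest power of $x$ appearing under a trace symbol, and also $\deg f$ in $x$ bounded by $2d$ — any sufficiently large $d$ works since $f$ involves only finitely many trace symbols). Then $f(X) \succeq 0$ for all $X \in \bigcup_n \sym{n}$ is exactly condition (i) of Theorem \ref{t:posss} for this $S = \emptyset$ and this $d$. By the equivalence (i)$\Leftrightarrow$(iv), this holds if and only if there is $q \in Q \setminus \{0\}$ with $qf \in Q$, where $Q$ is the preordering in $\fT$ generated by $\emptyset \cup \Omega = \Omega$. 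Unwinding the definition of the preordering generated by $\Omega$: its elements are precisely finite sums of products of the form $r^2 \cdot \Tr(g_1^2) \cdots \Tr(g_k^2)$ with $r \in \fT$ and $g_i \in \fT$ — that is, sums of products of squares and traces of squares of trace polynomials. Thus $qf \in Q$ with $q \in Q$, $q \ne 0$ says exactly that $f = (qf)/q$ is a quotient of two such sums, giving the "only if" direction. For the "if" direction, one checks directly that any element of $\Omega$, and hence any quotient of elements of $\Omega$, is positive semidefinite on all symmetric matrices: squares $r(X)^2 \succeq 0$ trivially, $\Tr(g(X)^2)$ is a nonnegative scalar since $g(X)^2 \succeq 0$ has nonnegative normalized trace, and products and sums of such preserve positivity; a quotient of two positive semidefinite trace polynomials is positive semidefinite wherever the denominator is nonzero, and since $q \ne 0$ in the integral domain $\sT$ (note $q \in \Omega \subset \sT$) it is nonzero for a generic matrix evaluation — but we actually want positivity everywhere, which follows because $f = (qf)/q$ is a genuine element of $\fT$ agreeing with the quotient on the dense set where $q$ is nonzero, and $f(X)\succeq 0$ there; alternatively, cite that (iv)$\Rightarrow$(i) in Theorem~\ref{t:posss} already delivers this.

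I expect the only genuinely delicate point to be the bookkeeping in the last paragraph: reconciling the statement "$f$ is a quotient of sums of products of squares and traces of squares" with the formal content of condition (iv), namely "$\exists q \in Q\setminus\{0\}$ with $qf \in Q$." One must be careful that $Q$ here is the preordering generated by $\Omega$ inside $\fT$ (not $\sT$), so its generators $\Tr(g^2)$ lie in $\sT$ but the ambient squares $r^2$ may involve $x$; the phrase "squares and traces of squares of trace polynomials" in the corollary is meant to cover exactly products $r^2 \prod_i \Tr(g_i^2)$ with $r, g_i \in \fT$, which is the description of $\Omega$-generated preordering elements. Making this translation explicit — and noting that $q$ being nonzero as an element of the domain $\sT$ is what "quotient" requires — is the substance of the argument. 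Everything else is an immediate specialization of the already-proven theorem.

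\begin{proof}
Take $S=\emptyset$. Then $\cK_S=\bigcup_n\sym{n}$, and $L_S=\{\gamma\in\R^{2d}\colon\han_d[\gamma]\succeq0\}$ is the solution set of a linear matrix inequality; since $\cK_S\cap\sym{n}=\sym{n}$ has nonempty interior, $L_S$ is regular closed by Lemma \ref{l:cvx}(b). Given $f\in\fT$, pick $d\in\N$ large enough that $f\in\fT_{2d}$. Then Theorem \ref{t:posss} applies, and $f(X)\succeq0$ for all $X\in\bigcup_n\sym{n}$ (condition (i)) is equivalent to the existence of $q\in Q\setminus\{0\}$ with $qf\in Q$ (condition (iv)), where $Q$ is the preordering in $\fT$ generated by $\Omega$.

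Unravelling the definition of the preordering generated by $\Omega$, its elements are exactly the finite sums of terms $r^2\Tr(g_1^2)\cdots\Tr(g_k^2)$ with $r,g_1,\dots,g_k\in\fT$, i.e., sums of products of squares and traces of squares of trace polynomials. Hence $qf\in Q$ with $q\in Q\setminus\{0\}$ says precisely that $f$ is a quotient of two such sums. This proves the forward implication. Conversely, if $f=\omega_1/\omega_2$ with $\omega_1,\omega_2\in Q$ and $\omega_2\neq0$, then $\omega_2 f=\omega_1\in Q$, so condition (iv) holds and $f(X)\succeq0$ for all symmetric $X$ by the implication (iv)$\Rightarrow$(i) of Theorem \ref{t:posss}.
\end{proof}
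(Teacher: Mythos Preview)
Your proof is correct and follows exactly the paper's approach: apply Lemma~\ref{l:cvx} and Theorem~\ref{t:posss} with $S=\emptyset$. The paper's proof is a one-line invocation of those two results, and you have simply spelled out the details (choosing $d$ so that $f\in\fT_{2d}$, verifying that $L_\emptyset$ is regular closed via Lemma~\ref{l:cvx}(b), and unwinding condition~(iv) to match the phrasing of the corollary).
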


\begin{proof}
Apply Lemma \ref{l:cvx} and Theorem \ref{t:posss} with $S=\emptyset$.
\end{proof}

\begin{rem}\label{r:sdp}
While sum-of-squares certificates with denominators as in Theorem \ref{t:posss}(iv) are seldom of immediate use in polynomial optimization,
our methods nevertheless yield a practical algorithm for optimizing the normalized trace of a univariate polynomial over matrices of arbitrary sizes. Let $M$ be a symmetric matrix over $\sT_{2d}$ whose entries are linear in trace symbols, and let $S$ be a set of constraints describing $M\succeq0$. Then Proposition \ref{p:main} implies that, for $p\in\R[x]$ of degree at most $2d$, one can compute
$$\inf\big\{\Tr(p(X)) \colon X\in\cK_S \big\}$$
by solving the semidefinite program in $2d$ variables
$$\min\ \Tr(p)[\gamma] \quad\text{subject to}\quad \han_d[\gamma]\succeq0\ \& \ M[\gamma]\succeq0,$$
which can be done efficiently using interior point methods \cite{BPT}.
\end{rem}

\subsection{Impure tracial constraints}

In Theorem \ref{t:posss}, only pure tracial constraints are allowed; that is, $S\subset\sT$. A natural extension to $S\subset \fT$ could consider the preordering in $\fT$ generated by
$$S\cup\{\tr(g^2),\, \tr(g^2s)\colon g\in\fT,\, s\in S \}$$
or rather its variation with Hankel matrices.
For $s\in\fT$ let $\han_d^s$ be the $(d+1)\times(d+1)$ Hankel matrix whose $(i,j)$-entry is $\Tr(x^{i+j-2}s)$ (cf. localizing Hankel matrix \cite[Section 4.1.3]{Lau}). Observe that
$\han_d^s\succeq0$ corresponds to $\Tr(q^2s)\ge0$ for all $q\in \R[x]$ of degree at most $d$.
Thus one could consider, for each $d\in\N$, the preordering in $\fT$ generated by
\begin{equation}\label{e:d2}
S\cup\{\sigma_j(\han_d),\, \sigma_j(\han_d^s)\colon s\in S,\, 1\le j\le d+1 \}.
\end{equation}
This approach indeed leads to an algebraic certificate of positivity on $\cK_S$ as in \eqref{e:ks} if $S=S'\cup\{x\}$ or $S=S'\cup\{x-a,b-x\}$ for $S'\subset\sT$ and $a<b$. The reasoning is analogous to the proof of Theorem \ref{t:posss}: in Proposition \ref{p:main}, the truncated Hamburger problem is now replaced by the truncated Stieltjes and Hausdorff problems, respectively, which are also solvable under the positive definiteness assumption \cite{CF}. We leave the details to the reader. However, for a general $S\subset \fT$, the positivity on $\cK_S$ cannot be certified with preorderings generated by \eqref{e:d2}, as demonstrated by the following example.

\begin{exa}
Let $P\subseteq \sT$ be the preordering generated by
$$\{\sigma_j(\han_d),\, \sigma_j(\han_d^{x^3})\colon j,d\in\N,\, j\le d+1 \}.$$	
For every $X\in\sym{n}$, $X^3\succeq0$ implies $\Tr(X)\ge0$. On the other hand, we claim that there are no $p_1,p_2\in P$ and $k\in\N$ such that
$$p_1\Tr(x)=\Tr(x)^{2k}+p_2.$$
To see this it suffices to prove the following: for every $d\in\N$ there exists $\alpha\in\R^{2d+4}$ such that
\begin{equation}\label{e:x3}
\han_{d+2}[\alpha]\succeq 0 \quad\text{and}\quad\han_d^{x^3}[\alpha]\succeq0 \quad\text{and}\quad \alpha_1<0.
\end{equation}
Fix $d\in\N$, and let $\mu$ be a $(d+2)$-atomic measure on $[0,\infty)$. Let $\beta_1=-1$ and $\beta_i = \int t^{i-2}\,d\mu$ for $i=2,\dots, 2d+4$. The Stieltjes moment problem \cite[Example 3.1.8]{Mar} implies
\begin{equation}\label{e:B}
B:=\left(\beta_{i+j} \right)_{i,j=1}^{d+2} \succeq 0
\quad\text{and}\quad
\left(\beta_{i+j+1}\right)_{i,j=1}^{d+1} \succeq 0.
\end{equation}
Furthermore, $B\succ0$ since $\mu$ has enough atoms. Let $b^\ti=(\beta_1\, \cdots \, \beta_{d+2}).$ Since $B$ is positive definite, there exists $\ve>0$ such that $B-\ve bb^\ti\succeq0$. Then $\alpha = \ve\beta\in\R^{2d+4}$ satisfies \eqref{e:x3}.
\end{exa}


\end{document}